\tikzset{vertex/.style={circle,draw,fill,inner sep=0pt,minimum size=1mm}}
\theoremstyle{plain}
\newtheorem{thm}{Theorem}
\newtheorem{lem}[thm]{Lemma}
\newtheorem{prop}[thm]{Proposition}
\newtheorem{remark}[thm]{Remark}
\newtheorem{question}[thm]{Question}
\theoremstyle{definition}
\newtheorem{definition}[thm]{Definition}
\newtheorem{exl}[thm]{Example}
\numberwithin{thm}{section}
\newcommand{\adj}{\leftrightarrow}
\newcommand{\adjeq}{\leftrightarroweq}
\def\Z{{\mathbb Z}}
\def\N{{\mathbb N}}
\begin{document}

\title{Approximate Fixed Point Property for
       Digital Trees and Products}
\author{Laurence Boxer
\thanks{
    Department of Computer and Information Sciences,
    Niagara University,
    Niagara University, NY 14109, USA;
    and Department of Computer Science and Engineering,
    State University of New York at Buffalo.
    Email: laurence.boxer@gmail.com 
}
}

\date{}
\maketitle{}
\begin{abstract}
We add to our knowledge of the approximate fixed point property
(AFPP) in digital topology.

We show that a digital image that is a tree has the AFPP.

Given two digital images $(X,\kappa)$ and $(Y,\lambda)$
that have the approximate fixed point property,
does their Cartesian product have the AFPP?
We explore conditions that yield an affirmative answer.
A general answer to this question is not known at the current
writing.
\end{abstract}

\section{Introduction}
The study of fixed points of continuous functions $f: X \to X$
has long captured the attention of researchers in many
areas of mathematics. It was introduced in digital
topology by A. Rosenfeld~\cite{Rosenfeld}. Rosenfeld showed
that even a digital image as simple as a digital interval
need not have a fixed point property (FPP), but does have an
``almost" or ``approximate" fixed point property (AFPP) (precisely
defined in~\cite{BoxKar12}). It was shown in~\cite{BoxKar12}
that among digital images, only singletons have the FPP;
perhaps as a consequence, attention shifted to the 
AFPP for digital images in such papers 
as~\cite{BoxerNP,BxAFPP,BEKLL,BoxKar12,CinKar,Han19}.
In this paper, we continue to study the AFPP for digital images;
in particular, for trees and for Cartesian products.

\section{Preliminaries}
Much of this section is quoted or 
paraphrased from the references,
especially~\cite{BxAFPP}.

We use $\Z$ to indicate the set of integers,
$\N$ for the set of natural numbers, and
$\N^*$ for the set of nonnegative integers.

\subsection{Adjacencies}
The $c_u$-adjacencies are commonly used.
Let $x,y \in \Z^n$, $x \neq y$, where we consider these points as $n$-tuples of integers:
\[ x=(x_1,\ldots, x_n),~~~y=(y_1,\ldots,y_n).
\]
Let $u \in \Z$,
$1 \leq u \leq n$. We say $x$ and $y$ are 
{\em $c_u$-adjacent} if
\begin{itemize}
\item there are at most $u$ indices $i$ for which 
      $|x_i - y_i| = 1$, and
\item for all indices $j$ such that $|x_j - y_j| \neq 1$ we
      have $x_j=y_j$.
\end{itemize}
Often, a $c_u$-adjacency is denoted by the number of points
adjacent to a given point in $\Z^n$ using this adjacency.
E.g.,
\begin{itemize}
\item In $\Z^1$, $c_1$-adjacency is 2-adjacency.
\item In $\Z^2$, $c_1$-adjacency is 4-adjacency and
      $c_2$-adjacency is 8-adjacency.
\item In $\Z^3$, $c_1$-adjacency is 6-adjacency,
      $c_2$-adjacency is 18-adjacency, and $c_3$-adjacency
      is 26-adjacency.
\end{itemize}

For $\kappa$-adjacent $x,y$, we write $x \adj_{\kappa} y$ or $x \adj y$ when $\kappa$ is understood.
We write $x \adjeq_{\kappa} y$ or $x \adjeq y$ to mean that either $x \adj_{\kappa} y$ or $x = y$.

We say $\{x_n\}_{n=0}^k \subset (X,\kappa)$ is a {\em $\kappa$-path} (or a {\em path} if $\kappa$ is understood)
from $x_0$ to $x_k$ if $x_i \adjeq_{\kappa} x_{i+1}$ for $i \in \{0,\ldots,k-1\}$, and $k$ is the {\em length} of the path.

A subset $Y$ of a digital image $(X,\kappa)$ is
{\em $\kappa$-connected}~\cite{Rosenfeld},
or {\em connected} when $\kappa$
is understood, if for every pair of points $a,b \in Y$ there
exists a $\kappa$-path in $Y$ from $a$ to $b$.

We define
\[ N(X,\kappa, x) = \{ y \in X \, | \, x \adj_{\kappa} y\},
\]
\[ N^*(X,\kappa, x) = \{ y \in X \, | \, x \adjeq_{\kappa} y\} =
   N(X,\kappa, x) \cup \{x\}.
\]

\begin{definition}
{\rm \cite{Berge}}
\label{normalDef}
Given digital images $(X,\kappa)$ and $(Y,\lambda)$,
the {\em normal product adjacency} $NP(kappa,\lambda)$ for
the Cartesian product $X \times Y$ is as follows. For
$x,x' \in X$, $y, y' \in Y$, we have
$(x,y) \adj_{NP(\kappa,\lambda)} (x',y')$ if
\begin{itemize}
    \item $x \adj_{\kappa} x'$ and $y=y'$, or
    \item $x=x'$ and $y \adj_{\lambda} y'$, or
    \item  $x \adj_{\kappa} x'$ and $y \adj_{\lambda} y'$.
\end{itemize}
\end{definition}

\subsection{Digitally continuous functions}
The following generalizes a definition of~\cite{Rosenfeld}.

\begin{definition}
\label{continuous}
{\rm ~\cite{Boxer99}}
Let $(X,\kappa)$ and $(Y,\lambda)$ be digital images. 
A function $f: X \rightarrow Y$ is 
{\em $(\kappa,\lambda)$-continuous} if for
every $\kappa$-connected $A \subset X$ we have that
$f(A)$ is a $\lambda$-connected subset of $Y$.
If $(X,\kappa)=(Y,\lambda)$, we say such a function is {\em $\kappa$-continuous},
denoted $f \in C(X,\kappa)$.
$\Box$
\end{definition}

When the adjacency relations are understood, we will simply say that $f$ is \emph{continuous}. Continuity can be expressed in terms of adjacency of points:
\begin{thm}
{\rm ~\cite{Rosenfeld,Boxer99}}
A single-valued function $f:X\to Y$ is continuous if and only if $x \adj x'$ in $X$ implies $f(x) \adjeq f(x')$. \qed
\end{thm}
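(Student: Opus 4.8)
The plan is to prove both implications directly from the definition of continuity, exploiting the fact that adjacency of two points is precisely the statement that a two-element set is connected.

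For the forward direction, I would assume $f$ is $(\kappa,\lambda)$-continuous and take any $x,x' \in X$ with $x \adj x'$. The set $A = \{x,x'\}$ is $\kappa$-connected, since $x \adjeq_{\kappa} x'$ already exhibits a path of length one between its two points. By continuity, $f(A) = \{f(x), f(x')\}$ is $\lambda$-connected. A subset of at most two points is $\lambda$-connected exactly when its points coincide or are $\lambda$-adjacent, so $f(x) \adjeq_{\lambda} f(x')$, which is the desired conclusion.

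For the converse, I would assume that $x \adj x'$ implies $f(x) \adjeq f(x')$ and show that $f(A)$ is $\lambda$-connected whenever $A$ is $\kappa$-connected. Given $a, b \in A$, connectedness of $A$ supplies a $\kappa$-path $x_0 = a, x_1, \ldots, x_k = b$ in $A$ with $x_i \adjeq_{\kappa} x_{i+1}$ for each $i$. For each consecutive pair, either $x_i = x_{i+1}$, in which case $f(x_i) = f(x_{i+1})$, or $x_i \adj_{\kappa} x_{i+1}$, in which case the hypothesis gives $f(x_i) \adjeq_{\lambda} f(x_{i+1})$; in both cases $f(x_i) \adjeq_{\lambda} f(x_{i+1})$. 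Thus $f(x_0), \ldots, f(x_k)$ is a $\lambda$-path in $f(A)$ joining $f(a)$ to $f(b)$, so $f(A)$ is $\lambda$-connected.

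I do not anticipate a substantive obstacle here; the only point requiring a moment of care is the observation in the forward direction that $\lambda$-connectedness of a set of at most two points is equivalent to those points being equal or $\lambda$-adjacent, which follows immediately from the definition of a $\kappa$-path and of $\adjeq_{\lambda}$. The argument generalizes the single-point case automatically, since a constant function trivially satisfies both conditions.
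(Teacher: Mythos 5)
Your proof is correct; the paper itself states this result without proof (citing Rosenfeld and Boxer~99), and your two-direction argument --- using that a two-point adjacent set is connected for the forward implication, and pushing a $\kappa$-path through $f$ for the converse --- is exactly the standard argument found in those references. No gaps.
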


Similar notions are referred to as {\em immersions}, 
{\em gradually varied operators}, and {\em gradually varied mappings}
in~\cite{Chen94,Chen04}.

Composition and restriction preserve continuity, in the sense of the following assertions.

\begin{thm}
{\rm \cite{Boxer99}}
\label{composition}
Let $(X,\kappa)$, $(Y,\lambda)$, and $(Z,\mu)$ be digital images.
Let $f: X \to Y$ be $(\kappa,\lambda)$-continuous and let
$g: Y \to Z$ be $(\lambda,\mu)$-continuous. Then
$g \circ f: X \to Z$ is $(\kappa,\mu)$-continuous.
\end{thm}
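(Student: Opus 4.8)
The plan is to work directly from the connectivity formulation of continuity in Definition~\ref{continuous}, which reduces the proof to a one-line chaining of the two hypotheses. The key observation is that taking images commutes with composition: for any $A \subseteq X$ we have $(g \circ f)(A) = g(f(A))$. So it suffices to track how connectedness is preserved as we apply $f$ and then $g$.

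First I would fix an arbitrary $\kappa$-connected subset $A \subseteq X$; by Definition~\ref{continuous} this is the only kind of set we need to test. Applying the $(\kappa,\lambda)$-continuity of $f$, the set $f(A)$ is a $\lambda$-connected subset of $Y$. Next, treating $f(A)$ as a connected subset of the domain of $g$, the $(\lambda,\mu)$-continuity of $g$ guarantees that $g(f(A))$ is a $\mu$-connected subset of $Z$. Since $g(f(A)) = (g \circ f)(A)$, we conclude that $(g \circ f)(A)$ is $\mu$-connected, which is exactly the condition required for $g \circ f$ to be $(\kappa,\mu)$-continuous.

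There is no serious obstacle here: the result is essentially immediate once the definition is unwound, and the only point to watch is that the intermediate set $f(A)$ is genuinely a subset of $Y$ on which the connectivity hypothesis for $g$ may be invoked. As an alternative, one could argue pointwise using the adjacency characterization stated just before: if $x \adj_{\kappa} x'$ then $f(x) \adjeq_{\lambda} f(x')$, and splitting into the cases $f(x) = f(x')$, where $g(f(x)) = g(f(x'))$, and $f(x) \adj_{\lambda} f(x')$, where continuity of $g$ gives $g(f(x)) \adjeq_{\mu} g(f(x'))$, yields $g(f(x)) \adjeq_{\mu} g(f(x'))$ in either case. I would prefer the connectivity version, since it avoids the case split and renders the argument fully symmetric in $f$ and $g$.
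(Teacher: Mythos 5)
Your argument is correct: the paper states Theorem~\ref{composition} as a citation from \cite{Boxer99} without reproducing a proof, and your unwinding of Definition~\ref{continuous} (a $\kappa$-connected $A$ maps under $f$ to a $\lambda$-connected $f(A)$, which maps under $g$ to a $\mu$-connected $g(f(A)) = (g\circ f)(A)$) is exactly the standard argument. The alternative pointwise version you sketch, including the case split on $f(x)=f(x')$ versus $f(x)\adj_{\lambda} f(x')$, is also sound and matches the adjacency characterization of continuity given just before Theorem~\ref{composition}.
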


\begin{thm}
\label{restrictionThm}
{\rm \cite{CinKar}}
Let $(X,\kappa)$ and $(Y,\lambda)$ be digital images.
Let $f: X \to Y$ be $(\kappa,\lambda)$-continuous. 
\begin{itemize}
    \item Let $A \subset X$. Then $f|_A: A \to Y$ is
          $(\kappa,\lambda)$-continuous.
    \item $f: X \to f(X)$ is $(\kappa,\lambda)$-continuous.
\end{itemize}
\end{thm}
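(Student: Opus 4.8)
The plan is to verify both assertions directly from the connectedness formulation of continuity in Definition~\ref{continuous}, exploiting the fact that both $\kappa$-connectedness and $\lambda$-connectedness are \emph{intrinsic} to a set: by the definition of connectedness recalled above, whether a subset $S$ is $\lambda$-connected depends only on $S$ together with the $\lambda$-adjacency relation among its own points, and not on any ambient image in which $S$ happens to sit. This single observation renders both bullets nearly automatic.

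For the first bullet I would fix an arbitrary $\kappa$-connected subset $B \subseteq A$ and show that $f|_A(B)$ is $\lambda$-connected. The key step is that a $\kappa$-connected subset of $A$ is automatically a $\kappa$-connected subset of $X$: any $\kappa$-path joining two points of $B$ already lies in $B \subseteq A \subseteq X$, so it serves equally well as a witness to the $\kappa$-connectedness of $B$ inside $X$. Applying the $(\kappa,\lambda)$-continuity of $f$ to $B \subseteq X$ then yields that $f(B)$ is $\lambda$-connected, and since $f|_A(B) = f(B)$ the conclusion follows.

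For the second bullet I would fix an arbitrary $\kappa$-connected $A \subseteq X$ and apply the continuity of $f: X \to Y$ to conclude that $f(A)$ is a $\lambda$-connected subset of $Y$. Because $f(A) \subseteq f(X)$ and the $\lambda$-paths realizing this connectedness already lie within $f(A)$, the set $f(A)$ is equally $\lambda$-connected when regarded as a subset of $f(X)$. Hence $f: X \to f(X)$ carries $\kappa$-connected sets to $\lambda$-connected sets and is therefore $(\kappa,\lambda)$-continuous.

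I do not expect a genuine obstacle here; the only point requiring care is the bookkeeping around subspace adjacency --- making explicit that passing to or from a sub-image alters neither $\kappa$- nor $\lambda$-connectedness, so that the hypothesis on $f$ transfers intact to each restricted map. Should a more pedestrian argument be preferred, either bullet can instead be verified via the pointwise criterion stated above, checking that $a \adj_\kappa a'$ forces $f(a) \adjeq_\lambda f(a')$; but the connectedness formulation sidesteps even that case check.
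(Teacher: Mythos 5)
Your argument is correct: since a $\kappa$-path witnessing connectedness of a subset lies entirely within that subset, connectedness is intrinsic, and both bullets follow immediately from Definition~\ref{continuous} exactly as you describe. The paper itself gives no proof of this theorem---it is quoted from the reference \cite{CinKar}---so there is nothing to compare against, but your verification is complete and is the standard one.
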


Given $X = \Pi_{i=1}^v X_i$, we denote throughout this paper the projection
onto the $i^{th}$ factor by $p_i$; i.e., $p_i: X \to X_i$ is defined by
$p_i(x_1,\ldots,x_v) = x_i$, where $x_j \in X_j$.

\begin{thm}
{\rm \cite{Han05}}
\label{projectionCont}
Given digital images $(X,\kappa)$ and $(Y,\lambda)$,
the projection maps $p_1$ and $p_2$ are
$(NP(\kappa,\lambda), \kappa)$-continuous and
$(NP(\kappa,\lambda), \lambda)$-continuous, respectively.
\end{thm}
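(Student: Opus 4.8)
The plan is to invoke the pointwise characterization of continuity stated earlier (the theorem attributed to Rosenfeld and Boxer): a single-valued function is continuous precisely when it carries adjacent points to points that are either adjacent or equal. Thus it suffices, for each projection, to verify that adjacency of a pair of points in $X \times Y$ under $NP(\kappa,\lambda)$ forces the required $\adjeq$-relation among the images in the relevant factor.

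First I would take $(x,y) \adj_{NP(\kappa,\lambda)} (x',y')$ and apply the definition of the normal product adjacency, which presents three cases. For $p_1$, in the first and third clauses the defining condition supplies $x \adj_{\kappa} x'$, so $p_1(x,y) = x \adj_{\kappa} x' = p_1(x',y')$; in the second clause the condition gives $x = x'$, so the two images coincide. In every case $p_1(x,y) \adjeq_{\kappa} p_1(x',y')$, which is exactly the continuity condition for $(NP(\kappa,\lambda),\kappa)$-continuity. The argument for $p_2$ is entirely symmetric: the second and third clauses supply $y \adj_{\lambda} y'$, while the first clause forces $y = y'$, so again $p_2(x,y) \adjeq_{\lambda} p_2(x',y')$ in all cases.

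The only point requiring care --- and the reason the statement is phrased with $\adjeq$ rather than $\adj$ --- is that a projection can collapse a genuine adjacency in the product to an equality in a factor (as in the second clause for $p_1$, or the first clause for $p_2$). Since \emph{continuity} is defined so as to permit the image of an adjacent pair to be a single point, this collapse is harmless, and no genuine obstacle arises. In short, the entire proof is a routine three-case check against the definition of $NP(\kappa,\lambda)$, with the characterization of continuity doing all the real work.
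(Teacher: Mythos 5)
Your proof is correct: the three-case check against the definition of $NP(\kappa,\lambda)$, combined with the pointwise characterization of continuity, is exactly the standard argument. The paper itself states this result as a citation to the literature and gives no proof, so there is nothing to compare against; your direct verification is the natural and complete one.
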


\subsection{Approximate fixed points}
\label{approxPrelim}
Let $f \in C(X,\kappa)$
and let $x \in X$. We say
\begin{itemize}
    \item $x$ is a {\em fixed point} of $f$ if $f(x)=x$; 
    \item If $f(x) \adjeq_{\kappa} x$, then
          $x$ is an {\em almost fixed point}~\cite{Rosenfeld,Tsaur} or
          {\em approximate fixed point}~\cite{BEKLL} of 
          $(f,\kappa)$.
    \item A digital image $(X,\kappa)$ has the
          {\em approximate fixed point property} (AFPP)~\cite{BEKLL} if for every $g \in C(X,\kappa)$
          there is an approximate fixed point of $g$.
\end{itemize}

\begin{remark}
What we call the AFPP was denoted in~{\rm \cite{BxAFPP}} as the
$AFPP_S$ in order to distinguish it from its more general
version for multivalued continuous functions, denoted
$AFPP_M$. In this paper, we discuss only single-valued
continuous functions, so we use the simpler notation.
\end{remark}

\begin{thm}
{\rm \cite{BEKLL}}
\label{BEKLLisoThm}
Let $X$ and $Y$ be digital images such that $(X,\kappa)$ and
$(Y,\lambda)$ are isomorphic. If $(X,\kappa)$ has the AFPP,
then $(Y,\lambda)$ has the AFPP.
\end{thm}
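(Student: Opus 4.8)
The plan is to exploit the isomorphism as a change of coordinates that conjugates any self-map of $Y$ into a self-map of $X$. Recall that an isomorphism of digital images is a bijection $h \colon X \to Y$ such that both $h$ and $h^{-1}$ are continuous; equivalently, $a \adjeq_\kappa b$ holds in $X$ if and only if $h(a) \adjeq_\lambda h(b)$ holds in $Y$. I would fix such an $h$, and then, to verify the AFPP for $(Y,\lambda)$, start from an arbitrary $g \in C(Y,\lambda)$ and produce an approximate fixed point for it.

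The central construction is the conjugate map $f = h^{-1} \circ g \circ h \colon X \to X$. Since $h$ is $(\kappa,\lambda)$-continuous, $g$ is $\lambda$-continuous, and $h^{-1}$ is $(\lambda,\kappa)$-continuous, two applications of Theorem~\ref{composition} show that $f \in C(X,\kappa)$. Because $(X,\kappa)$ has the AFPP, $f$ has an approximate fixed point $x_0$, i.e. $f(x_0) \adjeq_\kappa x_0$. I would then set $y_0 = h(x_0)$ and claim that $y_0$ is an approximate fixed point of $g$.

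To see this, observe that $f(x_0) = h^{-1}(g(y_0))$, so the relation $f(x_0) \adjeq_\kappa x_0$ reads $h^{-1}(g(y_0)) \adjeq_\kappa h^{-1}(y_0)$. Applying the adjacency-preserving map $h$ to both sides converts this into $g(y_0) \adjeq_\lambda y_0$, which is exactly the statement that $y_0$ is an approximate fixed point of $g$. Since $g$ was arbitrary, $(Y,\lambda)$ has the AFPP.

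The one point that needs care --- and the only real obstacle --- is this final step, where I pass the relation $\adjeq$ through $h$. The equivalence $a \adjeq_\kappa b \iff h(a) \adjeq_\lambda h(b)$ must cover both the adjacency case and the equality case: the adjacency case is immediate from $h$ being an isomorphism, while the equality case uses injectivity of $h$ (so $h(a) = h(b)$ forces $a = b$, and conversely). I would isolate this equivalence as the working characterization of an isomorphism and verify it explicitly, after which the remainder of the argument is a routine diagram chase.
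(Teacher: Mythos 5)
Your conjugation argument is correct and is the standard proof of this result; the paper itself states Theorem~\ref{BEKLLisoThm} as a cited result from~\cite{BEKLL} without reproducing a proof, and the argument in that reference is essentially the one you give. Your attention to the equality case of $\adjeq$ (via injectivity of $h$) is exactly the right point to be careful about, since continuity of $h$ alone only gives $a \adj_\kappa b \Rightarrow h(a) \adjeq_\lambda h(b)$, and bijectivity is what upgrades this to the two-way preservation of $\adjeq$ needed in the final step.
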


\begin{thm}
{\rm \cite{BEKLL}}
\label{BEKLLretractionThm}
Let $X$ and $Y$ be digital images such that 
$Y$ is a $\kappa$-retract of $X$. If $(X,\kappa)$ has the AFPP, then $(Y,\kappa)$ has the AFPP.
\end{thm}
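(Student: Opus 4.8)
The plan is to transfer an approximate fixed point of a map on $Y$ up to $X$, solve it there using the AFPP of $X$, and then push the solution back down via the retraction. Let $r : X \to Y$ denote the $(\kappa,\kappa)$-continuous retraction witnessing that $Y$ is a $\kappa$-retract of $X$, so that $r$ restricts to the identity on $Y$, and let $i : Y \hookrightarrow X$ be the inclusion. Since $Y$ carries the adjacency $\kappa$ inherited from $X$, the inclusion $i$ is trivially $(\kappa,\kappa)$-continuous.

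Given an arbitrary $g \in C(Y,\kappa)$, I would form the composite $G = i \circ g \circ r : X \to X$. By two applications of Theorem~\ref{composition}, $G \in C(X,\kappa)$. Because $(X,\kappa)$ has the AFPP, there is a point $x_0 \in X$ with $G(x_0) \adjeq_{\kappa} x_0$. I then set $y_0 = r(x_0) \in Y$ and observe that $G(x_0) = i(g(r(x_0))) = g(y_0)$, a point that already lies in $Y$.

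It remains to verify that $g(y_0) \adjeq_{\kappa} y_0$, which exhibits $y_0$ as an approximate fixed point of $g$. Here I would apply $r$ to the relation $G(x_0) \adjeq_{\kappa} x_0$. Using the adjacency characterization of continuity (a continuous map sends $\adj$ to $\adjeq$) and handling the trivial equality case separately, continuity of $r$ yields $r(G(x_0)) \adjeq_{\kappa} r(x_0) = y_0$. But $G(x_0) = g(y_0) \in Y$, and $r$ fixes every point of $Y$, so $r(G(x_0)) = g(y_0)$. Combining these gives $g(y_0) \adjeq_{\kappa} y_0$. Since $g \in C(Y,\kappa)$ was arbitrary, $(Y,\kappa)$ has the AFPP.

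This is essentially a routine fixed-point transfer, so no genuine obstacle is anticipated. The only point requiring mild care is the final step: one must note that the relation $\adjeq_{\kappa}$ (and not merely $\adj_{\kappa}$) is preserved by a continuous map, which follows from the adjacency characterization of continuity once the equality case is dispatched, and one must use that $r$ acts as the identity on the value $g(y_0)$, which is guaranteed precisely because $g(y_0)$ lands in $Y$.
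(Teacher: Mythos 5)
Your argument is correct and is the standard retraction transfer: compose $i\circ g\circ r$, find an approximate fixed point $x_0$ in $X$, and push it down via $r$, using that $r$ fixes $g(r(x_0))\in Y$. The paper itself states this theorem as a cited result from \cite{BEKLL} without reproducing a proof, and your proof matches the argument given there, with all the delicate points (continuity of the composite, preservation of $\adjeq_{\kappa}$ rather than just $\adj_{\kappa}$) handled properly.
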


\section{Trees}
A {\em tree} is a triple $T=(X,\kappa,s)$, where $s \in X$ and
$(X,\kappa)$ is a connected graph that is acyclic, i.e., lacking any 
subgraph isomorphic to a cycle of more than 2 points. The vertex $s$ is the {\em root}. 
Given $x \adj_{\kappa} y$ in $X$, we say $x$ is the {\em parent of} $y$, 
and $y$ is a {\em child of} $x$, if $x \adj_{\kappa} y$ and
the unique shortest path from $y$ to the root contains $x$. 
Every vertex of the tree, except the root, has a unique parent vertex. A vertex, 
in general, may have multiple children. 
We define, recursively, a {\em descendant} of $x$ in a tree $T=(X,\kappa,r)$ as follows: $y \in X$
is a descendant of $x \in X$ if $y$ is a child of $x$ or $y$ is a descendant of a child of $x$.

We will use the following.

\begin{prop}
\label{builder}
Let $(X,\kappa)$ have the AFPP. Let $X'=X \cup \{x_0\}$,
where $x_0 \not \in X$, and let there be a $\kappa$-retraction
$r: X' \to X$ such that 
$N^*(X',\kappa,x_0) \subset N^*(X',\kappa,r(x_0))$.
Then $(X',\kappa)$ has the AFPP.
\end{prop}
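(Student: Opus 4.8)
The plan is to reduce an arbitrary continuous self-map of $X'$ to a continuous self-map of $X$, invoke the AFPP of $X$, and then use the neighborhood hypothesis to absorb the single extra point $x_0$. So let $g \in C(X',\kappa)$ be arbitrary; the goal is to exhibit an approximate fixed point of $g$.

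First I would construct a companion map on $X$. Since $r: X' \to X$ is a $\kappa$-retraction it is $(\kappa,\kappa)$-continuous and satisfies $r|_X = \id$, and by Theorem~\ref{restrictionThm} the restriction $g|_X: X \to X'$ is continuous. Hence by Theorem~\ref{composition} the composite $h = r \circ g|_X : X \to X$ lies in $C(X,\kappa)$. Because $(X,\kappa)$ has the AFPP, there is a point $a \in X$ with $h(a) \adjeq_\kappa a$, that is, $r(g(a)) \adjeq_\kappa a$.

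Next I would split on the value $g(a) \in X'$. If $g(a) \in X$, then $r(g(a)) = g(a)$ because $r|_X = \id$, so $g(a) \adjeq_\kappa a$ and $a$ is already an approximate fixed point of $g$; this case uses no hypothesis on $x_0$. The only other possibility is $g(a) = x_0$, and here I claim the approximate fixed point is $r(x_0)$ rather than $a$. From $r(g(a)) \adjeq_\kappa a$ we obtain $a \adjeq_\kappa r(x_0)$, and hence $g(a) \adjeq_\kappa g(r(x_0))$ — by continuity of $g$ when $a \adj_\kappa r(x_0)$, and trivially when $a = r(x_0)$. Since $g(a) = x_0$, this places $g(r(x_0))$ in $N^*(X',\kappa,x_0)$, which by hypothesis is contained in $N^*(X',\kappa,r(x_0))$; therefore $g(r(x_0)) \adjeq_\kappa r(x_0)$, so $r(x_0)$ is an approximate fixed point of $g$.

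The conceptual heart, and the step I expect to demand the most care, is the case $g(a) = x_0$: the AFPP of $X$ only supplies an approximate fixed point $a$ of the pushed-down map $h = r \circ g|_X$, and since $r$ may change the value of $g$ at $a$, the point $a$ need not serve for $g$ itself. The containment $N^*(X',\kappa,x_0) \subset N^*(X',\kappa,r(x_0))$ is precisely the device that transfers near-fixedness from $a$ to the image point $r(x_0)$, and I would verify carefully that it is invoked correctly, including in the degenerate subcase $a = r(x_0)$ (where, applied to $x_0 \in N^*(X',\kappa,x_0)$, it forces $x_0 \adj_\kappa r(x_0)$).
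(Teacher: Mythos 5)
Your proof is correct and follows essentially the same route as the paper's: push $g$ down to $h = r\circ g|_X$, get an approximate fixed point $a$ of $h$ from the AFPP of $X$, and in the case $g(a)=x_0$ use continuity together with the containment $N^*(X',\kappa,x_0)\subset N^*(X',\kappa,r(x_0))$ to show $r(x_0)$ is an approximate fixed point of $g$. The only difference is cosmetic (your explicit treatment of the subcase $a=r(x_0)$, which the paper leaves implicit).
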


\begin{proof}
Let $f \in C(X',\kappa)$. Then
$g=r \circ f|_X \in C(X,\kappa)$. Therefore, there
is an approximate fixed point $y \in X$ of $g$.
\begin{itemize}
    \item If $f(y) \in X$, then $f(y)=g(y) \adjeq_{\kappa} y$, 
          as desired.
    \item Otherwise, $f(y)=x_0$ and 
          $y \adjeq_{\kappa} g(y) = r(x_0)$. The continuity of $f$ 
          implies $f(g(y)) \adjeq_{\kappa} f(y)=x_0$, hence
          \[f(g(y)) \in N^*(X',\kappa,x_0) \subset 
           N^*(X',\kappa,r(x_0)) = N^*(X', \kappa,g(y)).
          \]
          So $g(y)$ is an approximate fixed point of $f$.
\end{itemize}
In either case, $f$ has an approximate fixed point. Since
$f$ was taken as an arbitrary member of $C(X', \kappa)$,
the assertion follows.
\end{proof}

\begin{thm}
A digital image $(T,\kappa)$ that is a tree has the AFPP.
\end{thm}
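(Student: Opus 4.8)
The plan is to proceed by induction on the number of vertices, with Proposition~\ref{builder} supplying the inductive step. For the base case, a tree with a single vertex is a singleton, whose only self-map is the identity; the lone vertex is then a (genuine, hence approximate) fixed point, so the singleton has the AFPP.

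For the inductive step, assume every tree with $n$ vertices has the AFPP and let $(T,\kappa)$, $T=(X,\kappa,s)$, be a tree with $n+1$ vertices, $n\ge 1$. A finite tree with at least two vertices has at least two vertices of degree one, so I may choose a leaf $x_0$---a non-root vertex having no children---with $x_0\neq s$. Let $p$ be the parent of $x_0$, and put $W=X\setminus\{x_0\}$. Deleting a leaf preserves connectivity and acyclicity, so $(W,\kappa,s)$ is a tree on $n$ vertices and, by the inductive hypothesis, $(W,\kappa)$ has the AFPP.

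I then apply Proposition~\ref{builder} with its ``$X$'' taken to be $W$ and its ``$X'$'' taken to be $X=W\cup\{x_0\}$. Define $r\colon X\to W$ by $r(x_0)=p$ and $r(x)=x$ for $x\in W$. This $r$ is a $\kappa$-retraction: it is the identity on $W$, and for continuity the only adjacency in $X$ involving $x_0$ is $x_0\adj p$ (as $x_0$ is a leaf), for which $r(x_0)=p=r(p)$, while every other adjacency lies within $W$ and is preserved by the identity. The required neighborhood inclusion is then immediate, since the unique neighbor of $x_0$ is $p$: we have $N^*(X,\kappa,x_0)=\{x_0,p\}$, and because $x_0\adj p$ also $x_0\in N^*(X,\kappa,p)$, so
\[
N^*(X,\kappa,x_0)=\{x_0,p\}\subseteq N^*(X,\kappa,p)=N^*(X,\kappa,r(x_0)).
\]
Proposition~\ref{builder} now yields that $(X,\kappa)$ has the AFPP, closing the induction.

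Once one decides to peel off a \emph{leaf}, every hypothesis of Proposition~\ref{builder} falls out automatically---this is precisely what that proposition is designed to exploit---so there is no serious analytic obstacle in the finite case. The only genuinely load-bearing point is the tacit finiteness of the image, both for the existence of a non-root leaf and for the induction to terminate. Finiteness is in fact indispensable here: taking $(\Z,c_1)$ rooted at $0$ produces an infinite tree, yet the continuous map $x\mapsto x+2$ satisfies $x+2\not\adjeq x$ for every $x$, so $(\Z,c_1)$ has no approximate fixed point and lacks the AFPP. Thus the finite induction above is not merely a convenience but captures the actual content of the theorem.
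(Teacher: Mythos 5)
Your proof is correct and follows essentially the same route as the paper: induction on the number of vertices, deleting a leaf, and applying Proposition~\ref{builder} to the retraction that sends the leaf to its parent. Your closing observation that finiteness is essential (via $(\Z,c_1)$ and $x\mapsto x+2$) is a worthwhile addition not made explicit in the paper, and your $N^*(X,\kappa,x_0)=\{x_0,p\}$ corrects a small slip in the paper's statement of that set.
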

\begin{proof}
We argue by induction on $\#T$, the number of vertices in $T$.
The assertion is trivial for $\#T=1$. 

Suppose $k \in \N$ such that
the assertion is correct for all digital trees $T$ satisfying $\#T \le k$.
Now let $(T,\kappa)$ be a digital tree with $\#T = k+1$. Let
$v_0 \in T$ be a leaf of $T$, with $v_1 \in T$ as the parent of $v_0$.
Then $(T \setminus \{v_0\}, \kappa)$ is a digital tree of $k$ points. The function
$r: T \to T \setminus \{v_0\}$ defined by $r(v_0)=v_1$, 
$r(x) = x$ for $x \neq v_0$, is clearly a $\kappa$-retraction,
and $N^*(T,\kappa,v_0) = \{v_1\} \subset N^*(T,\kappa,r(v_0))$. 
It follows from the inductive hypothesis and 
Proposition~\ref{builder} that
$(T,\kappa)$ has the $AFPP_S$. This completes the induction.
\end{proof}

\section{Cartesian products}
In this section, we demonstrate an affirmative response to 
the following question.

\begin{question}
\label{prodQuestion}
{\rm \cite{BxAFPP}}
Let $X = \Pi_{i=1}^v [a_i,b_i]_{\Z}$, where for at least
2 indices~$i$ we have $b_i > a_i$. Does $(X,c_v)$ have
the AFPP?
\end{question}

Several authors have written that this question
was answered by Theorem~4.1 of~\cite{Rosenfeld}. However, 
it wasn't, as observed in~\cite{BxAFPP}:
\begin{quote}
    A. Rosenfeld's paper~\cite{Rosenfeld} states the following as its Theorem~4.1 (quoted verbatim).
\begin{quote}
    Let $I$ be a digital picture, and let $f$ be a continuous function from $I$
    into $I$; then there exists a point $P \in I$ such that $f(P)=P$ or is a neighbor
    or diagonal neighbor of $P$.
\end{quote}
Several subsequent papers have incorrectly
concluded that this result implies that $I$ with
some $c_u$ adjacency has the $AFPP_S$. 
By {\em digital picture} Rosenfeld means a digital cube, $I= [0,n]_{\Z}^v$.
By a ``continuous function" he means a $(c_1,c_1)$-continuous function;
by ``a neighbor or diagonal neighbor of $P$" he means a $c_v$-adjacent point.
\end{quote}

A partial solution to this problem is given in the following
(restated here in our terminology), which is Theorem~1 
of~\cite{Han19}. The ``proof" in~\cite{Han19} has multiple 
errors; a correct proof is given in~\cite{BxAFPP}.
\begin{thm}
\label{HanThm}
Let $X = [-1,1]_{\Z}^v$ and $1 \le u \le v$. Then
$(X,c_u)$ has the $AFPP_S$ if and only if $u=v$.
\end{thm}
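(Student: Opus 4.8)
The statement is an equivalence, so the plan is to prove the two implications separately; I expect the forward implication ($u=v \Rightarrow$ AFPP) to be easy and the reverse ($u<v \Rightarrow$ not AFPP) to carry the real content. For $u=v$, I would observe that the center $\mathbf{0}=(0,\ldots,0)$ is $c_v$-adjacent to every other point of $X=[-1,1]_{\Z}^v=\{-1,0,1\}^v$: any two points of $X$ agree or differ by exactly $1$ in each coordinate, and there are only $v=u$ coordinates, so $\mathbf{0} \adjeq_{c_v} y$ for all $y \in X$; that is, $N^*(X,c_v,\mathbf{0})=X$. Hence for every $f \in C(X,c_v)$ we have $f(\mathbf{0})\in X=N^*(X,c_v,\mathbf{0})$, so $f(\mathbf{0})\adjeq_{c_v} \mathbf{0}$ and $\mathbf{0}$ is an approximate fixed point. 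This gives the AFPP when $u=v$.

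For the reverse implication I would argue the contrapositive by exhibiting, for each $u<v$, a single $c_u$-continuous self-map of $X$ with no approximate fixed point, using a coordinatewise construction. Let $g:[-1,1]_{\Z} \to [-1,1]_{\Z}$ be given by $g(t)=1-|t|$, so that $g(-1)=g(1)=0$ and $g(0)=1$, and define $f:X\to X$ by $f(x_1,\ldots,x_v)=(g(x_1),\ldots,g(x_v))$. The two properties I would verify are: (i) $g$ displaces every point by exactly one, i.e.\ $|g(t)-t|=1$ for each $t\in\{-1,0,1\}$; and (ii) $g$ is $(c_1,c_1)$-continuous on $[-1,1]_{\Z}$, which holds since $||t|-|t'||\le|t-t'|$.

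Property (i) is the crux and is exactly what forces the hypothesis $u<v$: because $f$ changes every one of the $v$ coordinates of $x$ by exactly $1$, the points $x$ and $f(x)$ differ in all $v$ coordinates. For $f(x)\adjeq_{c_u}x$ one would need the differing coordinates to number at most $u$; since $v>u$, this fails, so $x$ is never an approximate fixed point of $f$. Property (ii) yields $c_u$-continuity: if $x\adj_{c_u}x'$ then, by definition of $c_u$-adjacency, $x$ and $x'$ agree outside a set $S$ of at most $u$ coordinates and differ by exactly $1$ on $S$; applying $g$ coordinatewise leaves the coordinates outside $S$ unchanged and, by (ii), changes each coordinate in $S$ by at most $1$, so $f(x)$ and $f(x')$ differ by at most $1$ in at most $|S|\le u$ coordinates, whence $f(x)\adjeq_{c_u}f(x')$. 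Thus $(X,c_u)$ fails the AFPP for every $u<v$, completing the equivalence. The only genuine obstacle is discovering the right building block $g$; once one insists on a coordinatewise map that moves \emph{every} coordinate by exactly $1$ (which already pins down $g(\pm1)=0$), both verifications reduce to routine finite checks.
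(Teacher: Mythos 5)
Your argument is correct, and it is worth noting that the paper itself gives no proof of this statement: Theorem~\ref{HanThm} is quoted from \cite{Han19} with the correct proof attributed to \cite{BxAFPP}, and it also falls out of the paper's later machinery (Theorem~\ref{containsUCube} for the ``only if'' half, and the product theorems culminating in Theorem~\ref{arbCube} for the ``if'' half, at least when $v>1$). Your route is more elementary and entirely self-contained. For $u=v$ you observe that $\mathbf{0}$ is $c_v$-adjacent to every other point of $\{-1,0,1\}^v$, so $N^*(X,c_v,\mathbf{0})=X$ and \emph{every} self-map (continuity is not even needed) has $\mathbf{0}$ as an approximate fixed point; this is sharper and much shorter than the paper's inductive product argument, though of course it exploits the special radius-one geometry of $[-1,1]_{\Z}^v$ and does not generalize to longer intervals the way Theorem~\ref{arbCube} does. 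For $u<v$ your coordinatewise map built from $g(t)=1-|t|$ is a valid counterexample: $g$ is $(c_1,c_1)$-continuous, moves each of $-1,0,1$ by exactly one, so $f=g\times\cdots\times g$ is $c_u$-continuous (the differing coordinates of $f(x),f(x')$ form a subset of those of $x,x'$) while $x$ and $f(x)$ always differ by $1$ in all $v>u$ coordinates, ruling out both equality and $c_u$-adjacency. This is in the same spirit as the obstruction behind Theorem~\ref{containsUCube} but is verified directly rather than cited. In short: same theorem, a genuinely more elementary and self-contained proof; what you lose relative to the paper's approach is only the generality needed for arbitrary boxes $\Pi_{i=1}^v[a_i,b_i]_{\Z}$.
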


We make use of the following.
\begin{thm}
{\rm \cite{BoxKar12}}
\label{BoxKarNP}
For $X \subset \Z^m$ and $Y \subset Z^n$,
$NP(c_m,c_n) = c_{m+n}$, i.e.,
given $x, x' \in X$, $y,y' \in Y$,
\[(x,y) \adj_{NP(c_m,c_n)} (x',y') \mbox{ if and only if }
(x,y) \adj_{c_{m+n}} (x',y').
\]
\end{thm}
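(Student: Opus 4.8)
The plan is to reduce both sides to a single clean characterization of $c_p$-adjacency in full dimension and then verify the two inclusions by a short case analysis. The key observation I would establish first is that on $\Z^p$, two distinct points $w,w'$ are $c_p$-adjacent if and only if $|w_i - w_i'| \le 1$ for every coordinate $i$. Indeed, the second clause of the $c_u$ definition (that coordinates not differing by exactly $1$ must agree) forces each coordinate difference to lie in $\{-1,0,1\}$, while the first clause (at most $u$ coordinates differing by $1$) is automatically satisfied when $u=p$, since there are only $p$ coordinates available. Thus $c_m$, $c_n$, and $c_{m+n}$ each collapse to maximum-metric adjacency in their respective spaces.

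Writing a point of $\Z^{m+n}$ as $(x,y)$ with $x \in \Z^m$ and $y \in \Z^n$, the above gives that $(x,y) \adj_{c_{m+n}} (x',y')$ holds exactly when $(x,y)\neq(x',y')$ and every one of the $m+n$ coordinates differs by at most $1$ in absolute value. I would then prove one inclusion by checking each of the three cases in Definition~\ref{normalDef} with $\kappa=c_m$ and $\lambda=c_n$: in every case the points are distinct (at least one block is strictly adjacent, hence unequal) and all coordinate differences are at most $1$ (the strictly adjacent block contributes differences $\le 1$ by the reduction above, and any unchanged block contributes differences $0$). So each of the three cases yields $c_{m+n}$-adjacency.

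For the reverse inclusion I would start from $(x,y) \adj_{c_{m+n}} (x',y')$, so the points are distinct and all coordinate differences are $\le 1$. The natural split is on whether each block changes: if $x=x'$ then $y\neq y'$ and the $y$-block is maximum-metric adjacent, giving the second clause of the normal product adjacency; symmetrically, if $y=y'$ we land in the first clause; and if $x\neq x'$ and $y\neq y'$ then each block is separately adjacent, i.e.\ $x \adj_{c_m} x'$ and $y \adj_{c_n} y'$, which is the third clause. This exhausts all possibilities and completes the equivalence.

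I do not expect a genuine obstacle here: the statement is essentially a definitional identity, and the only subtlety is recognizing that the cardinality bound in the $c_u$ definition is vacuous at full dimension, so all three adjacencies reduce to the maximum metric. The one point I would be careful to state explicitly is that distinctness of $(x,y)$ and $(x',y')$ is exactly what rules out the degenerate situation in the reverse direction in which both blocks would coincide.
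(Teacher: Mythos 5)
Your argument is correct: the reduction of $c_p$-adjacency at full dimension $u=p$ to the condition ``distinct points with every coordinate difference at most $1$ in absolute value'' is exactly right given the paper's definition (the cardinality bound is vacuous when $u=p$, and the second clause forces each coordinate difference into $\{-1,0,1\}$), and your three-way case split on whether each block changes is exhaustive and handles distinctness properly. Note, however, that the paper itself gives no proof of this statement --- it is quoted from the reference \cite{BoxKar12} --- so there is no internal argument to compare against; your write-up is a valid self-contained proof and is essentially the natural one for this definitional identity.
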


\begin{remark}
It is shown in~{\rm \cite{BoxKar12}} that for $m \le M$,
$n \le N$, $m+n < M+N$, if $X \subset \Z^M$ and 
$Y \subset Z^N$, then we can have $NP(c_m,c_n) \neq c_{m+n}$.
\end{remark}

\begin{thm}
\label{extendToProd}
Let $(X,\kappa)$ be a digital image with the AFPP. Then the image
$(X \times [0,n]_{\Z}, NP(\kappa,c_1))$ has the AFPP.
\end{thm}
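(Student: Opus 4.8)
The plan is to prove this by induction on $n$, using Proposition~\ref{builder} as the inductive engine, exactly as in the tree theorem. The base case $n=0$ is immediate, since $X \times [0,0]_{\Z}$ is isomorphic to $(X,\kappa)$ and the AFPP is an isomorphism invariant by Theorem~\ref{BEKLLisoThm}. For the inductive step, I would assume $(X \times [0,n-1]_{\Z}, NP(\kappa,c_1))$ has the AFPP and then try to adjoin the top slab $X \times \{n\}$ one point at a time, retracting each newly added point $(x,n)$ down to $(x,n-1)$ directly below it.

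The key technical point is that adding the whole slab at once is not a single application of Proposition~\ref{builder} (which adds exactly one point $x_0$), so I would instead order the points of $X \times \{n\}$ as $x_0^{(1)}, x_0^{(2)}, \dots$ and build up the image by a finite sequence of one-point extensions. At each stage I add a point $(x,n)$ and use the retraction $r$ that sends $(x,n) \mapsto (x,n-1)$ and fixes everything else already present. To apply Proposition~\ref{builder} I must verify two things: first, that $r$ is an $NP(\kappa,c_1)$-retraction, and second, the neighborhood containment
\[
N^*\bigl(X',NP(\kappa,c_1),(x,n)\bigr) \subset N^*\bigl(X',NP(\kappa,c_1),(x,n-1)\bigr),
\]
where $X'$ denotes the image after adjoining $(x,n)$. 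The neighborhood check is the crux: a neighbor of $(x,n)$ in $X'$ is either $(x,n-1)$ itself (the only point below it present in $X'$, since we add the slab from a fixed traversal) or another already-added slab point $(x',n)$ with $x' \adjeq_\kappa x$; in the normal product adjacency such an $(x',n)$ is also $NP(\kappa,c_1)$-adjacent to $(x,n-1)$ because $x' \adjeq_\kappa x$ and $|n-(n-1)|=1$. This is precisely where the normal product structure is essential, and it is what makes the containment hold.

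The main obstacle I anticipate is the bookkeeping around the retraction and the neighborhood at intermediate stages: when I add $(x,n)$, some of its horizontal neighbors $(x',n)$ in the top slab may not yet have been added, and I must ensure that whatever neighbors of $(x,n)$ do exist in the current $X'$ all lie in $N^*$ of $(x,n-1)$. Choosing the order of addition does not actually matter for the containment — any $(x',n)$ present with $x'\adjeq_\kappa x$ satisfies $(x',n)\adjeq_{NP(\kappa,c_1)}(x,n-1)$ regardless — so the argument is robust, but I would state the one-point extension lemma carefully. An alternative framing, which may read more cleanly, is to induct on $n$ while invoking a small auxiliary claim that repeated application of Proposition~\ref{builder} lets one attach a whole AFPP-preserving ``layer'' provided each new point retracts onto a present point dominating its neighborhood; I would likely present the explicit finite chain of one-point extensions to keep everything self-contained and to make the neighborhood inclusion transparent.
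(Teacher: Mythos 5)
Your strategy works and is genuinely different from the paper's. The paper also inducts on $n$, but in the inductive step it collapses the entire top slab at once via the retraction $r(x,k+1)=(x,k)$ and then argues directly: given $f$ on $X\times[0,k+1]_{\Z}$ it takes an approximate fixed point $p$ of $r\circ f\circ I$ and shows by a two-case analysis (is $f(p)$ in the lower block or not?) that either $p$ or $g(p)$ is an approximate fixed point of $f$ --- in effect re-running the proof of Proposition~\ref{builder} for a many-point attachment. Your point-by-point iteration of Proposition~\ref{builder} avoids repeating that argument and keeps everything at the level of one-point extensions, at the cost of the ordering bookkeeping you describe. Two caveats. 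First, your enumeration of the neighbors of $(x,n)$ in the intermediate image $X'$ is incomplete: besides $(x,n-1)$ and the already-added slab points $(x',n)$, every $(x',n-1)$ with $x'\adj_{\kappa}x$ lies in $X'$ (the entire lower block is present) and is $NP(\kappa,c_1)$-adjacent to $(x,n)$ by the third clause of Definition~\ref{normalDef}, so the parenthetical claim that $(x,n-1)$ is the only point below $(x,n)$ present in $X'$ is false. Fortunately each such $(x',n-1)$ is also $NP(\kappa,c_1)$-adjacent to $(x,n-1)$, so the containment $N^*(X',NP(\kappa,c_1),(x,n))\subset N^*(X',NP(\kappa,c_1),(x,n-1))$ still holds; just correct the enumeration when you write it up. Second, a finite chain of one-point extensions exhausts the slab $X\times\{n\}$ only when $X$ is finite; the theorem as stated does not assume this (contrast Theorem~\ref{arbCube}, which explicitly says ``finite''), whereas the paper's slab-at-once retraction needs no such hypothesis. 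If digital images are finite by convention your argument is complete; otherwise you must either add that hypothesis or switch to the paper's direct argument for the inductive step.
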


\begin{proof}
We argue by induction on $n$.

For $n=0$ we argue as follows. Since 
\[(X \times [0,0]_{\Z}, NP(\kappa,c_1))=(X \times \{0\}, NP(\kappa,c_1))
\]
is isomorphic to $(X,\kappa)$, it follows from Theorem~\ref{BEKLLisoThm} that
$(X \times [0,0]_{\Z}, NP(\kappa,c_1))$ has the
AFPP.

Now suppose $k \in \N^*$ and $(X \times [0,k]_{\Z}, NP(\kappa,c_1))$ has
the AFPP. To complete the induction, we must show that
$(X \times [0,k+1]_{\Z}, NP(\kappa,c_1))$ has the AFPP. Let
$r: X \times [0,k+1]_{\Z} \to X \times [0,k]_{\Z}$ be defined by
\[ r(x,t) = \left \{ \begin{array}{ll}
    (x,t) &  \mbox{if } 0 \le t \le k; \\
     (x,k) & \mbox{if } t = k+1.
\end{array} \right .
\]
Clearly, $r$ is $NP(\kappa,c_1)$-continuous and 
is a retraction.

Let $f \in C(X \times [0,k+1]_{\Z}, NP(\kappa,c_1))$.
Let $g: X \times [0,k]_{\Z} \to X \times [0,k]_{\Z}$
be defined by $g(x,t) = r \circ f \circ I(x,t)$, where
$I: X \times [0,k]_{\Z} \to X \times [0,k+1]_{\Z}$ is the
inclusion function. By the inductive hypothesis, $g$ has an
approximate fixed point; i.e., there exists
$p=(x_0,t_0) \in X \times [0,k]_{\Z}$ such that
\begin{equation}
\label{NPeq}
p \adjeq_{NP(\kappa,c_1)} g(p).
\end{equation}
\begin{itemize}
    \item If $f(p) \in X \times [0,k]_{\Z}$ then 
    \[ p \adjeq_{NP(\kappa,c_1)} g(p) =
       f(p),
    \]
          so $p$ is an approximate fixed point of $f$.
    \item Otherwise, we have that for some
          $x_1 \in X$, $f(p)=(x_1,k+1)$ and
          $g(p)=(x_1,k)$. 
          Let $p_1: X \times [0,k+1]_{\Z} \to X$
          and $p_2:  X \times [0,k+1]_{\Z} \to [0,k+1]_{\Z}$
          be the projections defined for $x \in X$, 
          $t \in [0,k+1]_{\Z}$ by
          \[ p_1(x,t) = x, ~~~p_2(x,t)=t.
          \]
          By Theorems~\ref{composition} and~\ref{projectionCont}, the functions
          $f \circ g$, $p_1 \circ f$, $p_1 \circ f \circ g$,
          $p_2 \circ f$, $p_2 \circ g$, and 
          $p_2 \circ f \circ g$ are all continuous.
          By continuity of $f$ 
          and~(\ref{NPeq}), $f(g(p)) \adjeq f(p)$,
          so 
          \begin{equation}
            \label{p1Adj}  
          p_1(f(g(p)) \adjeq_{\kappa} p_1(f(p)) = x_1 = p_1(g(p))
          \end{equation}
          and 
          $p_2(f(g(p))) \adjeq_{c_1} p_2(f(p))=k+1$, so
          $p_2(f(g(p)) \in \{k,k+1\}$, hence
          \begin{equation}
          \label{p2Adj}
          p_2(f(g(p)) \adjeq_{c_1} p_2(g(p)).
          \end{equation}
          By~(\ref{p1Adj}) and~(\ref{p2Adj}), $g(p)$ is an approximate fixed point of $f$.
\end{itemize}
In either case, $f$ has an approximate fixed point. This
completes the induction argument.
\end{proof}

\begin{lem}
\label{NPlemma}
Let $(X,\kappa)$ be a digital image. Consider
$(Y,c_v)$, where $Y=[0,n]_{\Z}^v$. 
For $X \times Y \times [0,n]_{\Z}$,
$NP(NP(\kappa,c_k)),c_1) = NP(\kappa, c_{k+1})$.
\end{lem}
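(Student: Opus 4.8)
The plan is to reduce the identity to a single clean point-level description of the normal product, from which both sides visibly expand to the same condition on triples. The engine is the following reformulation of Definition~\ref{normalDef}: for adjacencies $\kappa_1$ on $A$ and $\kappa_2$ on $B$, one has $(a,b) \adj_{NP(\kappa_1,\kappa_2)} (a',b')$ if and only if $a \adjeq_{\kappa_1} a'$, $b \adjeq_{\kappa_2} b'$, and $(a,b) \neq (a',b')$. Each of the three defining clauses manifestly supplies the two $\adjeq$ relations together with nonequality (recall that $\adj$ holds only between distinct points), and conversely, given both $\adjeq$ relations together with nonequality, at least one coordinate must be strictly adjacent, placing the pair into one of the three clauses.

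From this I would immediately read off the version for the reflexive closure, which is what the computation actually manipulates: $(a,b) \adjeq_{NP(\kappa_1,\kappa_2)} (a',b')$ if and only if $a \adjeq_{\kappa_1} a'$ and $b \adjeq_{\kappa_2} b'$, the nonequality clause simply dropping out once equality is permitted. In other words, $\adjeq$ for a normal product factors as the conjunction of the $\adjeq$ relations on the two factors.

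Next I would expand each side of the asserted equality on an arbitrary pair of triples $(x,y,t),(x',y',t') \in X \times Y \times [0,n]_{\Z}$. For the left side, writing $\mu = NP(\kappa,c_v)$, the reformulation gives $((x,y),t) \adj_{NP(\mu,c_1)} ((x',y'),t')$ exactly when $(x,y) \adjeq_{\mu} (x',y')$, $t \adjeq_{c_1} t'$, and the triples differ; and the factorization of $\adjeq$ applied to $\mu$ rewrites $(x,y) \adjeq_{\mu} (x',y')$ as $x \adjeq_{\kappa} x'$ together with $y \adjeq_{c_v} y'$. Thus the left side amounts to the four conditions $x \adjeq_{\kappa} x'$, $y \adjeq_{c_v} y'$, $t \adjeq_{c_1} t'$, and $(x,y,t) \neq (x',y',t')$. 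For the right side I would first invoke Theorem~\ref{BoxKarNP}, using the standard embeddings $Y \subset \Z^v$ and $[0,n]_{\Z} \subset \Z$ so that $Y \times [0,n]_{\Z} \subset \Z^{v+1}$ and $c_{v+1} = NP(c_v,c_1)$; the same two-step expansion then turns $NP(\kappa,c_{v+1}) = NP(\kappa,NP(c_v,c_1))$ into precisely the same four conditions. Comparing, the two adjacencies agree on every pair of triples, which is the assertion.

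I do not anticipate a real obstacle, as the content is bookkeeping; the identity is in essence the associativity $NP(NP(\kappa,c_v),c_1) = NP(\kappa,NP(c_v,c_1))$ of the normal product, followed by the substitution $NP(c_v,c_1) = c_{v+1}$ from Theorem~\ref{BoxKarNP}. The two places warranting care are keeping track of the nonequality clause --- specifically, verifying that it is exactly this clause that vanishes under $\adjeq$, so that the factorization of $\adjeq$ is valid without exception --- and applying Theorem~\ref{BoxKarNP} to the correct grouping, namely to the last two factors $Y \times [0,n]_{\Z}$, since that is the pairing that produces $c_{v+1}$.
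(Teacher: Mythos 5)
Your proposal is correct and follows essentially the same route as the paper's proof: both reduce the statement to the observation that $\adjeq$ for a normal product is the conjunction of the coordinatewise $\adjeq$ relations (with the nonequality clause handled once at the level of $\adj$), expand both sides to the same conditions on triples, and invoke Theorem~\ref{BoxKarNP} to identify $NP(c_v,c_1)$ with $c_{v+1}$. You are somewhat more explicit than the paper about justifying the factorization of $\adjeq$, but the argument is the same.
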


\begin{proof}
Let $x,x' \in X$, $y,y' \in Y$, $t,t' \in [0,n]_{\Z}$, where
\[ y=(y_1,\ldots, y_v),~~~y'=(y_1',\ldots, y_v'),
\]
$y_i,y_i' \in [0,n]_{\Z}$ for $i = 1,\ldots, v$, such that
$(x,y,t) \neq (x',y', t')$.
Then
\[ (x,y,t) \adj_{NP(NP(\kappa,c_v),c_1)} (x',y',t') 
~~~\mbox{ if and only if }
\]
\[ (x,y) \adjeq_{NP(\kappa,c_v)} (x',y')
   ~~~\mbox{and}~~~ t \adjeq_{c_1} t' ~~~\mbox{ if and only if }
\]
\[ x \adjeq_{\kappa} x' ~~~\mbox{and}~~~y \adjeq_{c_v} y'
   ~~~\mbox{and}~~~t \adjeq_{c_1} t' ~~~\mbox{ if and only if }
   \]
\[ x \adjeq_{\kappa} x' ~~~\mbox{and}~~~(y,t) \adjeq_{NP(c_v,c_1)}(y',t')~~~\mbox{ if and only if }
\]
(by Theorem~\ref{BoxKarNP})
\[ x \adjeq_{\kappa} x' ~~~\mbox{and}~~~(y,t) \adjeq_{c_{v+1}}(y',t')~~~\mbox{ if and only if }
\]
$(x,y,t) \adjeq_{NP(\kappa,c_{v+1})}(x',y',t')$. The
assertion is established.
\end{proof}

\begin{thm}
\label{extendProdCube}
Let $(X,\kappa)$ be a digital image with the AFPP. 
Let $Y = [0,n]_{\Z}^v$.
Then the image
$(X \times Y, NP(\kappa,c_v))$ has the AFPP.
\end{thm}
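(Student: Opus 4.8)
The plan is to proceed by induction on $v$, using Theorem~\ref{extendToProd} as both the base case and the engine of the inductive step, and invoking Lemma~\ref{NPlemma} to reconcile the nested normal-product adjacencies at each stage. For the base case $v = 1$, the cube $Y = [0,n]_{\Z}^1$ is simply the digital interval $[0,n]_{\Z}$, so $(X \times Y, NP(\kappa, c_1))$ has the AFPP immediately by Theorem~\ref{extendToProd}.

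For the inductive step I would assume the assertion holds for a fixed $v$; that is, for \emph{every} digital image $(X',\kappa')$ with the AFPP, the image $(X' \times [0,n]_{\Z}^v, NP(\kappa', c_v))$ has the AFPP. Now consider $Y = [0,n]_{\Z}^{v+1}$. The key observation is that this cube factors as $[0,n]_{\Z}^v \times [0,n]_{\Z}$, so that
\[
X \times [0,n]_{\Z}^{v+1} = \left( X \times [0,n]_{\Z}^v \right) \times [0,n]_{\Z}.
\]
Setting $X' = X \times [0,n]_{\Z}^v$ and $\kappa' = NP(\kappa, c_v)$, the inductive hypothesis (applied to our $X$, which has the AFPP) shows that $(X', \kappa')$ has the AFPP. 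Applying Theorem~\ref{extendToProd} to this image then yields that $(X' \times [0,n]_{\Z}, NP(\kappa', c_1))$ has the AFPP.

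It remains only to identify the adjacency $NP(\kappa', c_1) = NP(NP(\kappa,c_v),c_1)$ with the target adjacency $NP(\kappa, c_{v+1})$, and this is exactly the content of Lemma~\ref{NPlemma}. Substituting, I conclude that $(X \times [0,n]_{\Z}^{v+1}, NP(\kappa, c_{v+1}))$ has the AFPP, completing the induction. I expect the only delicate point to be the careful bookkeeping of the nested normal-product adjacencies: ensuring that the inductive hypothesis is invoked with the correct image $X'$ and adjacency $\kappa'$, and that Lemma~\ref{NPlemma} is applied in the direction that makes the adjacency produced by Theorem~\ref{extendToProd} coincide with the one named in the statement. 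The genuine mathematical content is carried entirely by Theorem~\ref{extendToProd} and Lemma~\ref{NPlemma}; no fresh fixed-point argument is needed here.
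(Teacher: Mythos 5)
Your proposal is correct and follows essentially the same route as the paper: induction on $v$ with Theorem~\ref{extendToProd} supplying both the base case and the inductive step, and Lemma~\ref{NPlemma} identifying $NP(NP(\kappa,c_v),c_1)$ with $NP(\kappa,c_{v+1})$. The only cosmetic difference is that you state the inductive hypothesis uniformly over all images with the AFPP, whereas the paper fixes $X$ throughout; both versions work since Theorem~\ref{extendToProd} already applies to an arbitrary image with the AFPP.
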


\begin{proof}
We argue by induction on $v$. For $v=1$, the assertion
is correct by Theorem~\ref{extendToProd}.

Suppose, for some $k \in \N^*$, for $Y = \Pi_{i=1}^k [0,n]_{\Z}$,
$(X \times Y, NP(\kappa,c_k))$ has the AFPP.
Then by Theorem~\ref{extendToProd}, 
$(X \times Y \times [0,n]_{\Z}, NP(NP(\kappa,c_k)),c_1)$ 
has the AFPP. Note that
$X \times Y \times [0,n]_{\Z} = X \times [0,n]_{\Z}^{k+1}$,
and, by Lemma~\ref{NPlemma}, that
$NP(NP(\kappa,c_k),c_1) = NP(\kappa, c_{k+1})$.
This completes our induction.
\end{proof}

\begin{thm}
\label{arbCube}
Let $(X,\kappa)$ be a finite digital image with the AFPP. 
Then the image
$(X \times \Pi_{i=1}^v [a_i,b_i]_{\Z}, NP(\kappa,c_v))$
has the AFPP.
\end{thm}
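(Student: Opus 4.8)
The plan is to realize the arbitrary box $\Pi_{i=1}^v [a_i,b_i]_{\Z}$ as an isomorphic copy of a retract of the equal-sided cube handled in Theorem~\ref{extendProdCube}, and then to let the isomorphism and retraction theorems assemble the conclusion. Accordingly, set $m_i = b_i - a_i$ for each $i$ and put $n = \max_{1 \le i \le v} m_i$, so that each $[0,m_i]_{\Z}$ sits inside $[0,n]_{\Z}$ and $\Pi_{i=1}^v [0,m_i]_{\Z} \subset [0,n]_{\Z}^v$.

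First I would normalize the lower endpoints. The coordinatewise translation $(t_1,\ldots,t_v) \mapsto (t_1 - a_1,\ldots,t_v - a_v)$ is a $c_v$-isomorphism of $(\Pi_{i=1}^v [a_i,b_i]_{\Z}, c_v)$ onto $(\Pi_{i=1}^v [0,m_i]_{\Z}, c_v)$, and taking it together with the identity on $X$ gives an isomorphism of $(X \times \Pi_{i=1}^v [a_i,b_i]_{\Z}, NP(\kappa,c_v))$ onto $(X \times \Pi_{i=1}^v [0,m_i]_{\Z}, NP(\kappa,c_v))$. By Theorem~\ref{BEKLLisoThm} it therefore suffices to show that the latter image has the AFPP.

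Next I would exhibit $X \times \Pi_{i=1}^v [0,m_i]_{\Z}$ as an $NP(\kappa,c_v)$-retract of $X \times [0,n]_{\Z}^v$ via the coordinatewise clamping map
\[ r(x,t_1,\ldots,t_v) = (x, \min\{t_1,m_1\}, \ldots, \min\{t_v, m_v\}). \]
This is manifestly the identity on $X \times \Pi_{i=1}^v [0,m_i]_{\Z}$, so the only thing to check is continuity. The decisive observation is that, for the full-dimensional adjacency, $t \adjeq_{c_v} t'$ in $\Z^v$ holds precisely when $|t_i - t_i'| \le 1$ for every $i$, while each clamp $t_i \mapsto \min\{t_i,m_i\}$ is $1$-Lipschitz. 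Hence if $(x,t) \adj_{NP(\kappa,c_v)} (x',t')$, then $x \adjeq_{\kappa} x'$ and $|\min\{t_i,m_i\} - \min\{t_i',m_i\}| \le |t_i - t_i'| \le 1$ for each $i$, so $r(x,t) \adjeq_{NP(\kappa,c_v)} r(x',t')$; thus $r$ is continuous and is a retraction.

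With these two reductions in hand, the conclusion is immediate: Theorem~\ref{extendProdCube} gives that $(X \times [0,n]_{\Z}^v, NP(\kappa,c_v))$ has the AFPP, Theorem~\ref{BEKLLretractionThm} transfers the AFPP to the retract $(X \times \Pi_{i=1}^v [0,m_i]_{\Z}, NP(\kappa,c_v))$, and the isomorphism of the second paragraph returns the result for $(X \times \Pi_{i=1}^v [a_i,b_i]_{\Z}, NP(\kappa,c_v))$. The single genuinely technical point, and the step I expect to be the main obstacle, is verifying that the clamping map respects the normal product adjacency; once the coordinatewise, $1$-Lipschitz characterization above is recorded, the remaining work is purely bookkeeping through the named theorems. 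I would also remark that this route does not in fact use the finiteness of $X$.
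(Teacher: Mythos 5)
Your proposal is correct and follows essentially the same route as the paper, which simply observes that $(X \times \Pi_{i=1}^v [a_i,b_i]_{\Z}, NP(\kappa,c_v))$ is isomorphic to an $NP(\kappa,c_v)$-retract of $X \times [0,n]_{\Z}^v$ and then cites Theorems~\ref{extendProdCube}, \ref{BEKLLisoThm}, and~\ref{BEKLLretractionThm}. You have merely made explicit the translation isomorphism and the clamping retraction (with its $1$-Lipschitz continuity check) that the paper leaves as ``clearly''; your side remark that finiteness of $X$ is not actually used is also accurate.
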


\begin{proof}
This follows from Theorems~\ref{extendProdCube},
\ref{BEKLLisoThm}, and~\ref{BEKLLretractionThm}, 
as the image
\[(X \times \Pi_{i=1}^v [a_i,b_i]_{\Z}, NP(\kappa,c_v)) \]
is clearly isomorphic to an $NP(\kappa,c_v)$-retract 
of $X \times [0,n]^v$ for some $n$.
\end{proof}

\begin{thm}
\label{intervalAFPP}
{\rm \cite{Rosenfeld}}
The digital image $([a,b]_{\Z}, c_1)$ has 
the AFPP. 
\end{thm}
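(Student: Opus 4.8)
The plan is to treat this as the one-dimensional discrete analogue of the intermediate value theorem. Given a continuous $f:[a,b]_{\Z}\to[a,b]_{\Z}$, I would study the displacement function $g(x)=f(x)-x$ and show that $g$ must attain a value in $\{-1,0,1\}$ at some point $x$, since $g(x)\in\{-1,0,1\}$ is precisely the statement $f(x)\adjeq_{c_1}x$, i.e.\ that $x$ is an approximate fixed point.

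First I would record the two ingredients that drive the argument. The boundary behavior comes for free: because $f$ takes values in $[a,b]_{\Z}$ we have $f(a)\ge a$ and $f(b)\le b$, so $g(a)\ge 0$ and $g(b)\le 0$. The local behavior comes from the characterization of continuity (adjacent points have adjacent-or-equal images): for consecutive $x,x+1$ we have $|f(x+1)-f(x)|\le 1$, hence $g(x+1)-g(x)=f(x+1)-f(x)-1\in\{-2,-1,0\}$. In particular $g$ is nonincreasing, and each downward step has magnitude at most $2$.

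Next I would locate the sign change. Since $g$ is nonincreasing, the set $\{x: g(x)\ge 0\}$ is an initial segment of $[a,b]_{\Z}$ that contains $a$; let $x^{*}$ be its largest element. If $x^{*}=b$ then $g(b)\ge 0$ and $g(b)\le 0$ force $g(b)=0$, so $b$ is a fixed point. Otherwise $x^{*}<b$, and by maximality $g(x^{*})\ge 0>g(x^{*}+1)$. The step bound gives $g(x^{*}+1)\ge g(x^{*})-2\ge -2$, so $g(x^{*}+1)\in\{-2,-1\}$. If $g(x^{*}+1)=-1$, then $x^{*}+1$ is an approximate fixed point; if $g(x^{*}+1)=-2$, then $g(x^{*})\le g(x^{*}+1)+2=0$ together with $g(x^{*})\ge 0$ gives $g(x^{*})=0$, so $x^{*}$ is a fixed point. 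In every case $f$ has an approximate fixed point, and since $f$ was arbitrary the image has the AFPP.

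The hard part will be nothing deep, but the one genuinely delicate point is the step of magnitude $2$: a priori $g$ could leap from a nonnegative value to a value $\le -2$ in a single step and thereby skip over all of $\{-1,0,1\}$. The resolution above is that such a skip can occur only when $g(x^{*})=0$ (forced by the step bound applied at the crossing), so the ``missed'' value is in fact caught at $x^{*}$ itself. Everything else is bookkeeping on a monotone integer sequence. I would also remark that this theorem admits a one-line alternative proof within the present framework: $([a,b]_{\Z},c_1)$ is a connected acyclic graph, hence a tree, so the result is an immediate consequence of the earlier theorem that every digital tree has the AFPP.
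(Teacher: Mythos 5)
Your argument is correct. Note, however, that the paper itself offers no proof of this statement: it is attributed to Rosenfeld's 1987 paper and simply cited, so there is no in-text argument to compare against. What you have supplied is a complete, self-contained proof in the spirit of the discrete intermediate value theorem, and it is essentially the classical argument (and, as far as one can tell, the one underlying Rosenfeld's original Theorem 4.1 in dimension one): the displacement $g(x)=f(x)-x$ satisfies $g(a)\ge 0$, $g(b)\le 0$, and $g(x+1)-g(x)\in\{-2,-1,0\}$ by $c_1$-continuity, and your case analysis at the last point where $g\ge 0$ correctly handles the only delicate possibility, namely a jump of size $2$ from a nonnegative value, which forces $g(x^{*})=0$. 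Your closing remark is also worth emphasizing: within this paper the shortest route is the one you mention in passing, since $([a,b]_{\Z},c_1)$ is a path graph, hence a connected acyclic graph, hence a tree, and the theorem of Section 3 that every digital tree has the AFPP applies immediately. The direct displacement argument buys independence from the tree machinery (and matches the historical source), while the tree-based derivation buys brevity and consistency with the paper's own development; either is acceptable.
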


\begin{thm}
\label{containsUCube}
{\rm \cite{BxAFPP}}
Let $X \subset \Z^v$ be such that $X$ has a subset $Y = \Pi_{i=1}^v [a_i,b_i]_{\Z}$, where
$v>1$; for all indices $i$, $b_i \in \{a_i,a_i + 1\}$; and, for at least 2 indices $i$,
$b_i=a_i+1$. Then $(X,c_u)$ fails to have the AFPP
for $1 \le u < v$.
\end{thm}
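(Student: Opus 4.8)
The plan is to establish the statement in its contrapositive form via the retraction principle: I will exhibit $Y$ as a $c_u$-retract of $X$ and then show that $(Y,c_u)$ by itself lacks the AFPP, so that Theorem~\ref{BEKLLretractionThm}, read contrapositively, forces $(X,c_u)$ to lack it as well. Throughout, write $L=\{\, i : b_i=a_i+1\,\}$ for the set of nondegenerate factors of the box and $d=\#L\ge 2$; the degenerate coordinates $i\notin L$ are pinned to the single value $a_i$ in $Y$.

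First I would build the retraction $r:X\to Y$ by coordinatewise clamping, setting $r(x)_i=\min\{\,b_i,\max\{a_i,x_i\}\,\}$. Since clamping acts independently in each coordinate and each coordinate map is monotone and $1$-Lipschitz on $\Z$, whenever $x\adj_{c_u}x'$ the images agree in every coordinate where $x$ and $x'$ agree and differ by at most $1$ in the remaining (at most $u$) coordinates; hence $r(x)\adjeq_{c_u}r(x')$ and $r$ is $c_u$-continuous. Because $Y\subseteq X$ and $r$ fixes $Y$ pointwise, $r$ is a genuine $c_u$-retraction of $X$ onto $Y$, and it remains only to treat $(Y,c_u)$.

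The core of the argument is to produce a continuous self-map of $(Y,c_u)$ with no approximate fixed point. I would take the coordinate-flip (antipodal) map $\sigma$ defined by $\sigma(y)_i=a_i+b_i-y_i$ for $i\in L$ and $\sigma(y)_i=y_i$ otherwise, which on each nondegenerate factor simply swaps the two values $a_i$ and $a_i+1$. A short check shows $\sigma$ is an isomorphism, hence continuous: an edge of $Y$ can only vary coordinates lying in $L$, and flipping preserves both which coordinates move and the fact that each moves by exactly $1$. For any $y$, however, $\sigma(y)$ differs from $y$ in all $d$ coordinates of $L$ and agrees elsewhere, so $y$ and $\sigma(y)$ differ in exactly $d$ coordinates; consequently $\sigma(y)$ is neither equal nor $c_u$-adjacent to $y$ once $u<d$, and $\sigma$ has no approximate fixed point. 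Combining with the retraction gives the failure of the AFPP for $(X,c_u)$.

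The step I expect to be the delicate one is the adjacency bookkeeping on $Y$: the $c_u$-adjacency inherited from $\Z^v$ engages only the $d$ nondegenerate coordinates, so the genuine threshold below which the flip map escapes $c_u$-adjacency is the number $d$ of long factors rather than the ambient dimension $v$. This is precisely where the hypothesis of at least two long factors enters (guaranteeing that some $u\ge 1$ lies below the threshold), and when every factor is nondegenerate, $d=v$ and one recovers exactly the range $1\le u<v$. I would therefore be careful to track this threshold explicitly and to verify that the clamping map is genuinely $c_u$-continuous rather than merely $c_1$-continuous, since it is the interaction of $u$ with the coordinate counts $d$ and $v$ that controls the whole argument.
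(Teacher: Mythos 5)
Your construction---clamp $X$ onto the box $Y$ coordinatewise, invoke the contrapositive of Theorem~\ref{BEKLLretractionThm}, and kill the AFPP on $Y$ with the coordinate flip $\sigma$---is sound as far as it goes, and since the present paper only cites this result from \cite{BxAFPP} without reproducing a proof, this is essentially the argument one would expect. The clamping map is indeed a $c_u$-retraction for every $u$, and $\sigma$ is a $c_u$-isomorphism of $Y$ under which $\sigma(y)$ differs from $y$ in exactly $d=\#L$ coordinates.

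The gap is the one you yourself flag in your final paragraph and then do not close: your argument shows that $\sigma(y)$ fails to be $c_u$-adjacent to $y$ only when $u<d$, so you obtain the failure of the AFPP for $1\le u<d$, whereas the theorem asserts it for the full range $1\le u<v$. When $d<v$ this leaves the values $d\le u<v$ unproved, and no repair of your construction will cover them: for such $u$, every point of $Y$ is an approximate fixed point of $\sigma$ (and of $\sigma\circ r$). Indeed the statement as printed fails in that range. Take $v=3$, $X=Y=[0,1]_{\Z}\times[0,1]_{\Z}\times[0,0]_{\Z}$, and $u=2$: the hypotheses hold with $d=2$, yet any two distinct points of $Y$ differ in at most two coordinates, each by $1$, so $(Y,c_2)$ is a complete graph on four vertices and trivially has the AFPP. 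What you have actually proved is the corrected statement with the threshold $d$ (the number of indices with $b_i=a_i+1$) in place of $v$; you should state that explicitly rather than present the argument as a proof of the theorem as written. In the case the paper actually uses downstream, namely $Y=\Pi_{i=1}^v[a_i,b_i]_{\Z}$ with all factors nondegenerate so that $d=v$, your proof is complete and gives exactly the claimed range $1\le u<v$.
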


As noted in~\cite{BxAFPP}, Theorem~\ref{containsUCube} states a severe limitation on the 
AFPP for digital images $X \subset \Z^v$ and the $c_u$ adjacency,
where $1 \le u < v$. We have the following.

\begin{thm}
For $1 \le u \le v$,
$(\Pi_{i=1}^v [a_i,b_i]_{\Z}, c_u)$ has the AFPP if and only
if $u=v$.
\end{thm}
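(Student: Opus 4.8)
The plan is to establish the two implications separately, each by assembling results already proved. Throughout write $C = \Pi_{i=1}^v [a_i,b_i]_{\Z}$, and, as in Question~\ref{prodQuestion}, I take at least two of the intervals to be nondegenerate (that is, $b_i > a_i$ for at least two indices $i$). This hypothesis is genuinely needed: if, say, $C = \{0\} \times [0,3]_{\Z} \subset \Z^2$, then $c_1$ and $c_2$ coincide on $C$ and both confer the AFPP, so the equivalence ``$u=v$'' would fail without it.

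For the ``only if'' implication I would argue the contrapositive: if $1 \le u < v$, then $(C,c_u)$ fails the AFPP. This is immediate from Theorem~\ref{containsUCube}. Inside each factor choose a subinterval $[a_i', a_i'+1]_{\Z}$ when $b_i > a_i$, and the single point $\{a_i\}$ otherwise; this produces a subset $Y = \Pi_{i=1}^v [a_i',b_i']_{\Z} \subset C$ with $b_i' \in \{a_i', a_i'+1\}$ for every $i$ and with $b_i' = a_i' + 1$ for at least two indices. Theorem~\ref{containsUCube} then yields that $(C,c_u)$ has no AFPP for $1 \le u < v$, which is exactly the contrapositive.

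For the ``if'' implication I must show that $(C,c_v)$ has the AFPP. When $v=1$ this is precisely Theorem~\ref{intervalAFPP}. For $v \ge 2$, I would take the base image $([a_1,b_1]_{\Z},c_1)$, which is finite and has the AFPP by Theorem~\ref{intervalAFPP}, and apply Theorem~\ref{arbCube} with the $(v-1)$-dimensional cube $\Pi_{i=2}^v [a_i,b_i]_{\Z}$. This gives that $\bigl([a_1,b_1]_{\Z} \times \Pi_{i=2}^v [a_i,b_i]_{\Z},\, NP(c_1,c_{v-1})\bigr)$ has the AFPP. Since $[a_1,b_1]_{\Z} \subset \Z^1$ and $\Pi_{i=2}^v [a_i,b_i]_{\Z} \subset \Z^{v-1}$, Theorem~\ref{BoxKarNP} identifies $NP(c_1,c_{v-1})$ with $c_v$, while the underlying product set is exactly $C$. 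Hence $(C,c_v)$ has the AFPP.

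I expect the work here to be bookkeeping rather than a genuine obstacle. The one point requiring care is matching the normal-product adjacency $NP(c_1,c_{v-1})$ to $c_v$ via Theorem~\ref{BoxKarNP}, which forces me to record the correct ambient dimensions $\Z^1$ and $\Z^{v-1}$; and in the forward direction I must be explicit that it is precisely the nondegeneracy of at least two factors that licenses the application of Theorem~\ref{containsUCube}. The case $v=1$ should be dispatched first, since the product construction degenerates there and the equivalence reduces directly to Theorem~\ref{intervalAFPP}.
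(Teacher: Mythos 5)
Your proof is correct and follows essentially the same route as the paper's: Theorem~\ref{containsUCube} disposes of $u<v$, and the case $u=v$ is handled by Theorem~\ref{intervalAFPP} for $v=1$ and by combining Theorems~\ref{BoxKarNP} and~\ref{arbCube} for $v>1$. Your side remark that the ``only if'' direction tacitly requires at least two nondegenerate factors is a fair observation about the statement's hypotheses, but it does not alter the argument.
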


\begin{proof}
For $u < v$, the assertion comes from 
Theorem~\ref{containsUCube}. Now consider the case $u=v$.
For $v=1$, the assertion follows from Theorem~\ref{intervalAFPP}.
For $v > 1$, Theorem~\ref{BoxKarNP} lets us conclude that
\[ (\Pi_{i=1}^v [a_i,b_i]_{\Z}, c_v) = 
   ([a_1,b_1]_{\Z} \times \Pi_{i=2}^v [a_i,b_i]_{\Z}, NP(c_1,c_{v-1})).
\]
The assertion follows from Theorem~\ref{arbCube}.
\end{proof}

\section{Further remarks}
We have shown that a digital image that is a tree has the AFPP.

A general answer to the question posed in the abstract is
not known at this writing. We have shown that given
a finite digital image $(X,\kappa)$ with the AFPP, then
$(X \times \Pi_{i=1}^v [a_i,b_i]_{\Z}, NP(\kappa,c_v))$
has the AFPP. It follows that $(\Pi_{i=1}^v [a_i,b_i]_{\Z},c_v)$
has the AFPP.

\bibliographystyle{amsplain}

\begin{thebibliography}{n} 

\bibitem{Berge}
C. Berge, {\em Graphs and Hypergraphs}, 2nd edition,
North-Holland, Amsterdam, 1976.

\bibitem{Boxer99}
L. Boxer,
{\em A classical construction for the digital fundamental group},
Pattern Recognition Letters 10 (1999), 51-62.

\bibitem{BoxerNP}
L. Boxer,
{\em Generalized normal product adjacency in digital topology}, 
Applied General Topology 18 (2) (2017), 401-427.

\bibitem{BxAFPP}
L. Boxer, Approximate fixed point properties in digital topology, 
{\em Bulletin of the International Mathematical Virtual Institute}
10 (2) (2020), 357-367.

\bibitem{BEKLL}
L. Boxer, O. Ege, I. Karaca, J. Lopez, and J. Louwsma, 
{\em Digital fixed points, approximate fixed points, and universal functions}, 
Applied General Topology 17(2), 2016, 159-172.

\bibitem{BoxKar12}
L. Boxer and I. Karaca, 
{\em Fundamental Groups for Digital Products},
Advances and Applications in Mathematical Sciences 
11(4) (2012), 161-180.

\bibitem{Chen94}
L. Chen,
{\em Gradually varied surfaces and its optimal uniform approximation},
SPIE Proceedings 2182 (1994), 300-307.

\bibitem{Chen04}
L. Chen, {\em Discrete Surfaces and Manifolds},
Scientific Practical Computing, Rockville, MD, 2004.

\bibitem{CinKar}
I. Cinar and I. Karaca,
{\em Some properties of multivalued functions in digital topology},
Bulletin of the International Mathematical Virtual Institute 9 (2019), 553-565.

\bibitem{Han05}
S-E Han,
Non-product property of the digital fundamental group,
{\em Information Sciences} 171 (2005), 73–91.

\bibitem{Han19}
S-E Han,
{\em Remarks on the preservation of the almost fixed point property involving several types
of digitizations},
Mathematics 7, 954 (2019)


\bibitem{Rosenfeld}
A. Rosenfeld,
{\em `Continuous' functions on digital images},
Pattern Recognition Letters 4 (1987), 177-184.

\bibitem{Tsaur}
R. Tsaur and M. Smyth, 
{\em ``Continuous" multifunctions in discrete spaces with applications to fixed point theory},
in: G. Bertrand, A. Imiya, and R. Klette (eds.),
{\em Digital and Image Geometry, Lecture Notes in Computer Science}, vol. 2243, Springer, Berlin / Heidelberg, 2001, 151-162. 


\end{thebibliography}

\end{document}